\theoremstyle{plain}
\newtheorem{thm}{Theorem}[section]
\newtheorem*{thm*}{Theorem}
\newtheorem*{lemma*}{Lemma}
\newtheorem*{prop*}{Proposition}
\newtheorem*{cor*}{Corollary}
\newtheorem*{conj*}{Conjecture}
\newtheorem*{alg*}{Algorithm}
\theoremstyle{definition}
\newtheorem{ex}[thm]{Example}
\theoremstyle{remark}
\newtheorem*{rmk}{Remark}
\newcommand{\rr}{\mathbb{R}}
\newcommand{\bfk}{\mathbf{k}}
\newcommand{\cg}{\mathcal{G}}
\newcommand{\cd}{\mathcal{D}}
\newcommand{\ind}{\mbox{$\perp \kern-5.5pt \perp$}}
\DeclareMathOperator{\image}{image}
\newcommand{\pa}{\mathrm{pa}}
\newcommand{\an}{\mathrm{an}}
\title{Discrete Max-Linear Bayesian Networks}
\author{Benjamin Hollering and Seth Sullivant}
\begin{document}

\maketitle

\begin{abstract}
    Discrete max-linear Bayesian networks are directed graphical models specified by the same recursive structural equations as max-linear models but with discrete innovations. When all of the random variables in the model are binary, these models are isomorphic to the conjunctive Bayesian network (CBN) models of Beerenwinkel, Eriksson, and Sturmfels. Many of the techniques used to study CBN models can be extended to discrete max-linear models and similar results can be obtained. In particular, we extend the fact that CBN models are toric varieties after linear change of coordinates to all discrete max-linear models. 
\end{abstract}

\section{Introduction}
Max-linear Bayesian networks are a special class of graphical models introduced in \cite{GK18} to model extreme events. A max-linear Bayesian network, $X = (X_1, \ldots X_n)$ is determined by a directed acyclic graph $\cd = (V, E)$, edge weights $c_{ij} \geq 0$, and independent positive random variables $Z_1 \ldots Z_n$ called \emph{innovations}. The $Z_i$ have support $(0, \infty)$ and are required to have atom-free distributions. Then the random vector $X$ is required to satisfy the structural equations
\[
X_i = \bigvee_{j \in \pa(i)} c_{ij}X_j  \vee Z_i,
\]
where $\vee$ denotes maximum and $\pa(i)$ denotes the parents of vertex $i$ in $\cd$. These models are able to more accurately model extreme events spreading throughout a network than standard discrete or Gaussian Bayesian networks \cite{GK18, KL19} and also have interesting ties to tropical geometry \cite{AKLT20}. 

Max-linear models also exhibit a remarkable property concerning conditional independence: they are usually not \emph{faithful} to their underlying DAG, meaning that they often satisfy more conditional independence statements than those implied by the d-separation criterion \cite{KL19}. Am\'{e}ndola, Kl\"uppelberg, Lauritzen, and Tran recently gave a new criterion named \emph{$\ast$-separation} which gives a complete set of conditional independence statements for max-linear models \cite{AKLT20}. There has also been work done to establish when the parameters of the model are identifiable \cite{GK18, GKS20, KL19}. In all of this work, the atom-free property of the innovations is key. 

In this note, we consider discrete max-linear Bayesian networks so we assume the $Z_i$ are all $k$-state discrete random variables. This means the $Z_i$ are now atomic so much of the above work on max-linear models does not apply; however, these models are closely related to the conjunctive Bayesian network (CBN) models discussed in \cite{BES06, BES07}. CBN models were originally introduced in \cite{BES06} to model HIV drug resistance and cancer development via mutation in a genome. In CBN models, events accumulate as you move up a poset (or DAG) similar to the way extreme events spread throughout a max-linear Bayesian network. Part of our goal in this note is to show that CBN models are naturally included in the family of max-linear models. Though we do not develop the details here, we also believe it would be interesting to find a relationship between the typical max-linear model and our discrete analogue similar to the relationship found between the CBN model and its continuous analogue as the authors did in \cite{BS09}.

In Section 2 we give a brief overview of the combinatorial objects we use throughout the note and some background on the CBN model. In Section 3 we define the discrete max-linear Bayesian network model and show that when all of the random variables involved are binary, it is equal to the CBN model after a natural change of coordinates. In Section 4 we describe the algebraic structure of discrete max-linear model.

\section{Preliminaries}
In this section we provide some basic background on graphs and posets which we will use throughout this note. We also discuss Conjunctive Bayesian Network (CBN) models which were  first introduced in \cite{BES06}. These models are directly related to the discrete max-linear models we discuss in this note. 

\subsection{Graphs, Posets, and Lattices}
Our notation for graphs follows that of \cite{AKLT20} while our notation for posets and lattices follows that of \cite{RS12}. The graphs in this note will primarily be directed graphs which are given by a vertex set $V = \{1, \ldots n\}$ and an edge set $E = \{j \to i : i, j \in V, ~i \neq j \}$. A vertex $j$ is a \emph{parent} of $i$ if $j \to i \in E$  and we denote the set of parents of $i$ by $\pa(i)$. A \emph{path} from $j$ to $i$ is a sequence of distinct nodes $[j = \ell_0, \ell_1, \ldots, \ell_m = i]$ such that either $\ell_r \to \ell_{r+1}$ or $\ell_{r+1} \to \ell_{r}$ for each $r$. A \emph{directed path} from $j$ to $i$ is a path where $\ell_r \to \ell_{r+1}$ for all $r$ and a \emph{directed cycle} is a directed path where $j = i$. A vertex $j$ is an \emph{ancestor} of $i$ if there exists a directed path from $j$ to $i$ and we denote the set of ancestors of $i$ by $\an(i)$.  Most directed graphs in this note will be \emph{directed acyclic graphs} (DAG) which are directed graphs with no directed cycles. 

We will also frequently use partially ordered sets, typically called posets, throughout this note. A \emph{poset}, $P$, is a set equipped with a binary relation $\leq$ that is reflexive, antisymmetric, and transitive. A chain is a poset where any two elements are comparable and we denote a chain whose elements are $[k] = \{1, \ldots, k\}$ with the natural relation by $\bfk$. An element $j$ of $P$ is said to \emph{cover} $i$,
if $j > i$ and there is no other element $k$ such that $j> k > i$.
A poset can be represented by an undirected graph called the \emph{Hasse diagram}
of the poset where the vertices correspond to elements of $P$, edges
represent cover relations, and the elements are ordered in the figure
so smaller elements are at the bottom.  See Figure \ref{fig:CBNexample}
for an example.

An \emph{order ideal} of $P$ is a subset $I$ of $P$ such that if $i \in I$ and $j \leq i$ then $j \in I$. A \emph{dual order ideal} of $P$ is a subset $I$ of $P$ such that if $i \in P$ and $j \geq i$ then $j \in P$. An \emph{order-preserving map} is a map $\phi: P \to Q$ between posets such that if $j, i \in P$ satisfy $j \leq i$ then $\phi(j) \leq \phi(i)$. 

Many of the posets we work with will be obtained by taking the \emph{transitive closure} of a DAG. Let $\cd$ be a DAG and define a relation on the vertices by $j \leq i$ if and only if there is a directed path from $j$ to $i$. We refer to this poset as $\cd^\mathrm{tr}$, which denotes the transitive closure poset.

A lattice, $L$, is a poset equipped with two binary operations, denoted $\vee$ and $\wedge$,  such that for any elements $s, t \in L$ the least upper bound of $s$ and $t$ exists which is $s \vee t$ and the greatest lower bound of $s$ and $t$ exists which is $s \wedge t$. The operation $\vee$ is typically referred to as the \emph{join} while $\wedge$ is called the \emph{meet}. A classic example that will arise in this note is the lattice of order ideals of a finite poset. Let $P$ be a finite poset and denote by $J(P)$ the order ideals of $P$ ordered by inclusion then $J(P)$ is a lattice with meet given by intersection and join given by union. Furthermore, $J(P)$ is a \emph{distributive lattice} meaning the operations $\vee$ and $\wedge$ distribute over each other. Throughout this note we will also use $\vee$ to denote the maximum of a subset of a totally ordered set and $\wedge$ to denote the minimum since these operations are the join and meet respectively for finite subsets of totally ordered sets. 


\subsection{Conjunctive Bayesian Networks}
\label{sec:CBN}
In this section we provide some basic background on the CBN model which was first introduced by Beerenwinkel, Eriksson, and Sturmfels in \cite{BES06} as a mathematical model for mutations occurring in a genome. For further information we refer the reader to \cite{BES06, BES07}. 
In Section \ref{sec:DMLBN}, we show that CBN models can be viewed as a subclass of D-MLBN models which are our main focus in this note. 

The CBN model begins with a poset $P$ whose elements are called \emph{events} which are usually taken to be the elements of $[n]$. The state space of the model is the lattice of order ideals $J(P)$ and elements $g \in J(P)$ are called \emph{genotypes}. It is often convienent to think of a state $g$ both as a subset of the ground set $[n]$ and as a 0-1 string and we will do so throughout the note. 

The parameterization of the CBN model can be explicitly written down in terms of the poset $P$ but it is often convenient to instead view it as a directed graphical model.  We refer the reader to  \cite[Chapter 13]{SS18} for additional information on parameterizations of graphical models. 
We first form a DAG with edges $i \to j$ if $i < j$ is a cover relation in $P$ and associate a binary random variable $X_i$ to each $i \in P$. Then the CBN model is a directed graphical model on the $X_i$ with conditional probabilities given by
\begin{equation}
(P(X_i = b | X_{\pa(i)} = a))_{a \in \{0, 1\}^{\pa(i)}, b \in \{0, 1\}}
=
\begin{pmatrix}
\label{eqn:cbnCondProbs}
1 & 0 \\
\vdots & \vdots \\
1 & 0 \\
\theta_0^{(i)} & \theta_1^{(i)}
\end{pmatrix}
\end{equation}
where the rows and columns are ordered lexicographically and 
$\theta_1^{(i)}$ is simply the conditional probability that the event $i$ occurs given all of its parents have occurred while $\theta_0^{(i)}$ is the conditional probability that the event $i$ does not occur given all of its parents have occurred. This means that $\theta_0^{(i)} + \theta_1^{(i)} = 1$. We note that this slightly different than the matrix of conditional probabilities shown in \cite{BES07} but it is equivalent. The probability of observing an event $g$ is the product of these conditional probabilities so
\[
p_g = P(X = g) = \prod_{i \in P} P(X_i = g_i | X_{\pa(i)} = g_{\pa(i)}).
\]

\begin{ex}
\label{ex:CBN}
Let $P$ be the poset pictured on the left of Figure \ref{fig:CBNexample}. The state space of the CBN model on $P$ is the lattice of order ideals $J(P)$ pictured on the right of Figure \ref{fig:CBNexample}. In this representation, the 0-1 strings represent the corresponding subsets of the ground set of $P$. For instance, the element $(1, 1, 1, 0, 1)$ of $J(P)$ represents the order ideal $\{1, 2, 3, 5\}$ of $P$. The parameterization of the CBN model on $P$ is
\begin{align*}
p_{00000} &= \theta_0^{(1)} \theta_0^{(2)}                                               &
p_{10000} &= \theta_1^{(1)} \theta_0^{(2)}                                              \\
p_{01000} &= \theta_0^{(1)} \theta_1^{(2)}                                               &
p_{11000} &= \theta_1^{(1)} \theta_1^{(2)} \theta_0^{(3)}                               \\
p_{11100} &= \theta_1^{(1)} \theta_1^{(2)} \theta_1^{(3)} \theta_0^{(4)} \theta_0^{(5)}  &
p_{11110} &= \theta_1^{(1)} \theta_1^{(2)} \theta_1^{(3)} \theta_1^{(4)} \theta_0^{(5)}  \\
p_{11101} &= \theta_1^{(1)} \theta_1^{(2)} \theta_1^{(3)} \theta_0^{(4)} \theta_1^{(5)}  &
p_{11111} &= \theta_1^{(1)} \theta_1^{(2)} \theta_1^{(3)} \theta_1^{(4)} \theta_1^{(5)}. 
\end{align*}

Note that we suppress the commas and parentheses when writing $p_g$ which we will do throughout this note. 
\end{ex}

\begin{figure}
    \centering
    \begin{subfigure}[b]{0.3 \linewidth}
        \begin{tikzpicture}[-, >=stealth', shorten >=1pt, auto, node distance=1cm, semithick]
        \tikzstyle{every node}=[circle, line width =1pt, font=\scriptsize, minimum height =0.65cm]
            \node (i1) [draw] {1};
            \node (i2) [right of = i1, xshift = 1cm, draw] {2};
            \node (i3) [above of = i1, xshift = 1.0cm, draw] {3};
            \node (i4) [above of = i3, draw, xshift = -1cm] {4};
            \node (i5) [above of = i3, draw, xshift = 1cm] {5};
            
            \path (i1) edge (i3);
            \path (i2) edge (i3);
            \path (i3) edge (i4);
            \path (i3) edge (i5);
            
            \node (label) [below of = i1, xshift = 1.0cm] {\scalebox{1.5}{$P$}};
        \end{tikzpicture}
    \end{subfigure}
    \begin{subfigure}[b]{0.3 \linewidth}
        \begin{tikzpicture}
            \node (i1) {$(0,0,0,0,0)$};
            \node (i2) [above left of = i1, xshift = -1.0cm, yshift = .3cm] {$(1, 0, 0, 0, 0)$};
            \node (i3) [above right of = i1, xshift =  1.0cm, yshift = .3cm] {$(0, 1, 0, 0, 0)$};
            \node (i4) [above right of = i2, xshift =  1.0cm, yshift = .3cm] {$(1, 1, 0, 0, 0)$};
            \node (i5) [above of = i4] {$(1, 1, 1, 0, 0)$};
            \node (i6) [above left of = i5, xshift =  -1.0cm, yshift = .3cm] {$(1, 1, 1, 1, 0)$};
            \node (i7) [above right of = i5, xshift =  1.0cm, yshift = .3cm] {$(1, 1, 1, 0, 1)$};
            \node (i8) [above right of = i6, xshift =  1.0cm, yshift = .3cm] {$(1, 1, 1, 1, 1)$};
            
            \path (i1) edge (i2);
            \path (i1) edge (i3);
            \path (i2) edge (i4);
            \path (i3) edge (i4);
            \path (i4) edge (i5);
            \path (i5) edge (i6);
            \path (i5) edge (i7);
            \path (i6) edge (i8);
            \path (i7) edge (i8);
            
            \node (label) [below of = i1] {$J(P)$};
        \end{tikzpicture}
    \end{subfigure}
    \caption{The hasse diagram of a poset $P$ and its lattice of order ideals $J(P)$ written as 0-1 strings instead of subsets of $\{1, 2, 3, 4, 5\}$.}
    \label{fig:CBNexample}
\end{figure}
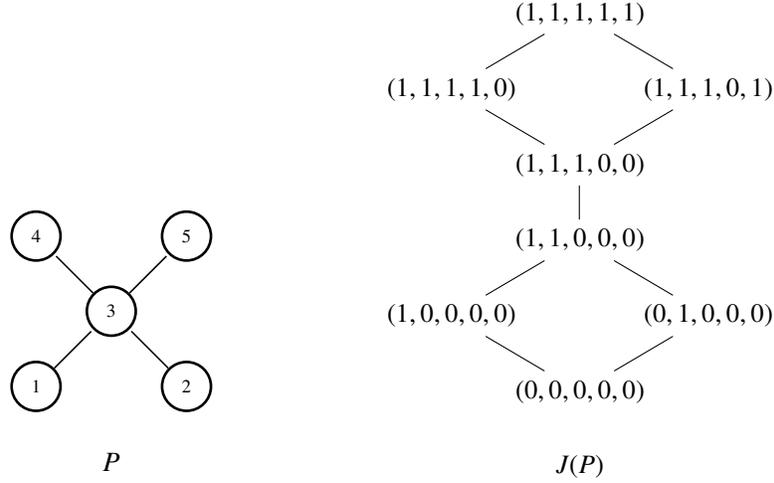

\section{Discrete Max-Linear Bayesian Networks}
\label{sec:DMLBN}
In this section we introduce a discrete version of the max-linear models studied in \cite{AKLT20, GK18, KL19}. When all of the random variables in the model are binary, there is a direct correspondence between discrete max-linear models and the conjunctive Bayesian networks discussed in \cite{BES06, BES07}. For this reason, we try to keep our notation consistent with that of \cite{BES07} when possible. 

Let $\cd = (V, E)$ be a directed acyclic graph (DAG) and associated a discrete random variable $Z_i$ with $k$ states to each vertex $i \in V$. Then the \emph{discrete max-linear Bayesian network} (D-MLBN) is the family of joint distributions of the random variables $(X_i)_{i \in V}$ specified by
\begin{equation}
\label{eqn:recStructEq}
    X_i = \bigvee_{j \in \pa(i)} X_j \vee Z_i, ~~~~~~ i \in V
\end{equation}
where $\pa(i)$ denotes the \emph{parents} of vertex $i$ in $\cd$. These are the same structural equations used to specify the max-linear Bayesian networks discussed in \cite{AKLT20, GK18, KL19} except there are no coefficients and the random variables $Z_1, \ldots Z_n$ are now discrete instead of continuous and atom-free. Despite these alterations, this system of equations still has the same solution which is
\begin{equation}
\label{eqn:solvedStructEq}
    X_i = \bigvee_{j \in \an(i) \cup \{ i \}} Z_j, ~~~~~~ i \in V
\end{equation}
where $\an(i)$ denotes the \emph{ancestors} of vertex $i$ in $\cd$.

The state space of the $k$-state discrete max-linear model is the set of order-preserving maps from the transitive closure of $\cd$ to a chain of length $k$. More explicitly, let $\bfk$ be a chain of size $k$ and let $\cd^\mathrm{tr}$ be the poset obtained by taking the transitive closure of $\cd$. Then $g = (g_1, \ldots g_n)$ is a possible state of the $k$-state D-MLBN if there exists an order-preserving map $\pi: \cd^\mathrm{tr} \to \bfk$ such that $\pi(i) = g_i$. This can be seen by directly examining the structural equations and their solution. Note that if $i \geq j$ in the partial order $D^\mathrm{tr}$, then $j$ is an ancestor of $i$ so there is a directed path $[j = \ell_0, \ell_1, \ldots, \ell_m = i]$ and for any $r$ it is immediate that, $\ell_{r} \in \pa(\ell_{r+1})$ which immediately implies that $X_{l_{r+1}} \geq X_{l_r}$. This gives a chain of inequalities from which we get $X_j \leq X_i$. Denote this set of states by $\cg(\cd, k)$ and note that this forms a distributive lattice with meet given by taking the coordinate-wise minimum and join given by taking the coordinate-wise maximum. When it is clear from context, we simply write $\cg$ instead of $\cg(\cd, k)$. 
This is analogous to the state space of the CBN model being the lattice of order ideals of a poset \cite{BES07}.  

\begin{ex}
\label{ex:dmlbn}
Let $\cd$ be the DAG pictured in Figure \ref{fig:dmlbnExample}. Then the structural equations of the D-MLBN model on $D$ are
\begin{align*}
    X_1 &= Z_1, \ \ X_2 = Z_2, \ \ X_3 = X_1 \vee Z_3 \\ 
    X_4 &= X_1 \vee X_2 \vee Z_4, \ \ X_5 = X_3 \vee X_4 \vee Z_5
\end{align*}
which have the solution
\begin{align*}
    X_1 &= Z_1, \ \ X_2 = Z_2, \ \ X_3 = Z_1 \vee Z_3 \\ 
    X_4 &= Z_1 \vee Z_2 \vee Z_4, \ \ X_5 = Z_1 \vee Z_2 \vee Z_3 \vee Z_4 \vee Z_5.
\end{align*}
If each of the $Z_i$ has two states, so $k = 2$, then the state space of the model is the lattice $\cg$ that is also pictured in Figure \ref{fig:dmlbnExample}. 
\end{ex}

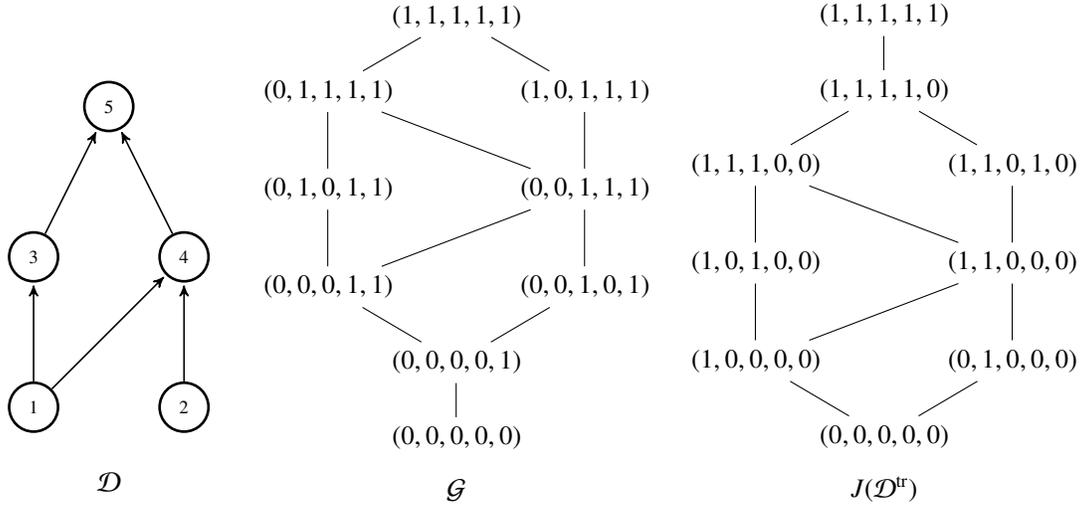
\begin{figure}
    \centering
    \begin{subfigure}[b]{0.2 \linewidth}
        \begin{tikzpicture}[->, >=stealth', shorten >=1pt, auto, node distance=2cm, semithick]
        \tikzstyle{every node}=[circle, line width =1pt, font=\scriptsize, minimum height =0.65cm]
            \node (i1) [draw] {1};
            \node (i2) [right of = i1, draw] {2};
            \node (i3) [above of = i1, draw] {3};
            \node (i4) [above of = i2, draw] {4};
            \node (i5) [above of = i3, xshift = 1.0cm, draw] {5};
            
            \path (i1) edge (i3);
            \path (i1) edge (i4);
            \path (i2) edge (i4);
            \path (i3) edge (i5);
            \path (i4) edge (i5);
            
            \node (label) [below of = i1, xshift = 1.0cm, yshift = 1cm] {\scalebox{1.5}{$\cd$}};
        \end{tikzpicture}
    \end{subfigure}
    \begin{subfigure}[b]{0.35 \linewidth}
        \begin{tikzpicture}
            \node (i0) {$(0, 0, 0, 0, 0)$};
            \node (i1) [above of = i0]{$(0, 0, 0, 0, 1)$};
            \node (i2) [above left of = i1, xshift = -1.0cm, yshift = .3cm] {$(0, 0, 0, 1, 1)$};
            \node (i3) [above right of = i1, xshift = 1.0cm, yshift = .3cm] {$(0, 0, 1, 0, 1)$};
            \node (i4) [above of = i2, yshift = .3cm] {$(0, 1, 0, 1, 1)$};
            \node (i5) [above of = i3, yshift = .3cm] {$(0, 0, 1, 1, 1)$};
            \node (i6) [above of = i4, yshift = .3cm] {$(0, 1, 1, 1, 1)$};
            \node (i7) [above of = i5, yshift = .3cm] {$(1, 0, 1, 1, 1)$};
            \node (i8) [above of = i1, yshift = 3.6cm] {$(1, 1, 1, 1, 1)$};
            
            \path (i0) edge (i1);
            \path (i1) edge (i2);
            \path (i1) edge (i3);
            \path (i2) edge (i4);
            \path (i2) edge (i5);
            \path (i3) edge (i5);
            \path (i4) edge (i6);
            \path (i5) edge (i6);
            \path (i5) edge (i7); 
            \path (i6) edge (i8);
            \path (i7) edge (i8);
            
            \node (label) [below of = i0, yshift = .3cm] {\scalebox{1}{$\cg$}};
        \end{tikzpicture}
    \end{subfigure}
    \begin{subfigure}[b]{0.3 \linewidth}
        \begin{tikzpicture}
            \node (i1) [above of = i0, yshift = -1cm]{$(0, 0, 0, 0, 0)$};
            \node (i2) [above left of = i1, xshift = -1.0cm, yshift = .3cm] {$(1, 0, 0, 0, 0)$};
            \node (i3) [above right of = i1, xshift = 1.0cm, yshift = .3cm] {$(0, 1, 0, 0, 0)$};
            \node (i4) [above of = i2, yshift = .3cm] {$(1, 0, 1, 0, 0)$};
            \node (i5) [above of = i3, yshift = .3cm] {$(1, 1, 0, 0, 0)$};
            \node (i6) [above of = i4, yshift = .3cm] {$(1, 1, 1, 0, 0)$};
            \node (i7) [above of = i5, yshift = .3cm] {$(1, 1, 0, 1, 0)$};
            \node (i8) [above of = i1, yshift = 3.6cm] {$(1, 1, 1, 1, 0)$};
            \node (i9) [above of = i8] {$(1, 1, 1, 1, 1)$};
            
            \path (i1) edge (i2);
            \path (i1) edge (i3);
            \path (i2) edge (i4);
            \path (i2) edge (i5);
            \path (i3) edge (i5);
            \path (i4) edge (i6);
            \path (i5) edge (i6);
            \path (i5) edge (i7); 
            \path (i6) edge (i8);
            \path (i7) edge (i8);
            \path (i8) edge (i9);
            
            \node (label) [below of = i0, yshift = .3cm] {\scalebox{1}{$J(\cd^\mathrm{tr})$}};
        \end{tikzpicture}
    \end{subfigure}
    \caption{A DAG $\cd$ and the state space, $\cg$, of the 2-state D-MLBN model pictured as the lattice of order preserving maps from $\cd$ to the chain $\mathbf{2}$. Also pictured is the lattice of order ideals $J(\cd^\mathrm{tr})$ of the poset $\cd^\mathrm{tr}$ written as 0-1 vectors.} 
    \label{fig:dmlbnExample}
\end{figure}

Similarly to the CBN model, the D-MLBN model can also be thought of as a directed graphical model on a DAG $\cd$ with additional restrictions on the parameters. In the usual directed graphical model, the parameters of the model are the conditional probabilities $P(X_i = x_i | X_{\pa(i)} = x_{\pa(i)})$ \cite{GSS05}. In the D-MLBN model, we can compute these conditional probabilities in terms of the distributions of the $Z_i$. Let each $Z_i$ have distribution 
$\theta^{(i)} = (\theta_0^{(i)}, \ldots, \theta_{k-1}^{(i)}) \in \Delta_{k-1}$ and $g \in \{0, \ldots k-1\}^n$. For any $i$ define $M_i = \bigvee_{j \in \pa(i)} g_j$ then we have
\begin{equation}
\label{eqn:dmlbnCondProbs}
   P(X_i = g_i | X_{\pa(i)} = g_{\pa(i)}) =
\begin{cases}
0, & g_i < M_i \\
\sum_{\ell \leq M_i} \theta_\ell^{(i)}, & g_i = M_i\\
\theta_{g_i}^{(i)}, & g_i > M_i
\end{cases}.  
\end{equation}
Using these conditional probabilities, we can then compute the probability $p_g = P(X = g)$ which is
\[
p_g = \prod_{i \in V} P(X_i = g_i | X_{\pa(i)} = g_{\pa(i)}).
\]
Note that if $g$ does not come from an order preserving map, that is $g \notin \cg(\cd, k)$, then $p_g = 0$. This means we can either think of the D-MLBN model as just being a model for the states $\cg(\cd, k)$ or a model for all $g \in \{0, \ldots k-1\}^n$ where the $g \notin \cg(\cd, k)$ have probability 0. Both of these perspectives can be useful when studying the algebraic structure of the model. The following example illustrates this parametric description of the model. 

\begin{ex}
We again let $\cd$ and $\cg$ be the DAG and lattice pictured in Figure \ref{fig:dmlbnExample}. Then from the above discussion the parameterization of the D-MLBN model is
\begin{align*}
p_{00000} &= \theta_0^{(1)} \theta_0^{(2)} \theta_0^{(3)} \theta_0^{(4)} \theta_0^{(5)}  &
p_{00001} &= \theta_0^{(1)} \theta_0^{(2)} \theta_0^{(3)} \theta_0^{(4)} \theta_1^{(5)} \\
p_{00011} &= \theta_0^{(1)} \theta_0^{(2)} \theta_0^{(3)} \theta_1^{(4)}                 &
p_{00101} &= \theta_0^{(1)} \theta_0^{(2)} \theta_1^{(3)} \theta_0^{(4)}                \\
p_{00111} &= \theta_0^{(1)} \theta_0^{(2)} \theta_1^{(3)} \theta_1^{(4)}                 &
p_{01011} &= \theta_0^{(1)} \theta_1^{(2)} \theta_0^{(3)}                               \\
p_{01111} &= \theta_0^{(1)} \theta_1^{(2)} \theta_1^{(3)}                                &
p_{10111} &= \theta_1^{(1)} \theta_0^{(2)}                                              \\
p_{11111} &= \theta_1^{(1)} \theta_1^{(2)}. 
\end{align*}
Note that while this appears to be a monomial map, the relationship $\theta_0^{(i)} + \theta_1^{(i)} = 1$ implies that it is not and so the ideal in these coordinates is not toric.  
\end{ex}

We now describe the relationship between the 2-state D-MLBN model and the CBN model of \cite{GSS05}. 

\begin{thm}
\label{thm:CBNisoDMLBN}
Let $\cd = (V, E)$ be a DAG and $\rho$ be the map parameterizing the CBN model on 
$\cd^\mathrm{tr}$. Let $\psi$ be the map parameterizing the 2-state D-MLBN model on $\cd$. Then $\image(\rho)$ is equal to $\image(\psi)$ after a natural relabeling of coordinates. In particular, there exists a bijection, 
$\phi: J(\cd^\mathrm{tr}) \to \cg(\cd, 2)$ such that
$\rho_g(\theta_0^{(1)}, \theta_1^{(1)}, \ldots, \theta_0^{(n)}, \theta_1^{(n)}) = \psi_{\phi(g)}(\theta_1^{(1)}, \theta_0^{(1)}, \ldots, \theta_1^{(n)}, \theta_0^{(n)})$.
\end{thm}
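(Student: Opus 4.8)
The plan is to take $\phi$ to be coordinate-wise complementation, $\phi(g) := (1-g_1,\dots,1-g_n)$, and then to verify the claimed identity factor by factor in the two product parameterizations. The first step is to check that $\phi$ really is a bijection $J(\cd^{\mathrm{tr}}) \to \cg(\cd,2)$. By definition a genotype $g \in J(\cd^{\mathrm{tr}})$ is (the indicator vector of) a down-set of $\cd^{\mathrm{tr}}$, so its complement $\{i : g_i = 0\}$ is a dual order ideal (up-set). On the other hand, a 0-1 vector $h$ lies in $\cg(\cd,2)$ exactly when it is an order-preserving map $\cd^{\mathrm{tr}} \to \mathbf{2}$, and one checks immediately that this is equivalent to $\{i : h_i = 1\}$ being an up-set of $\cd^{\mathrm{tr}}$. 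Thus $h \mapsto \{i : h_i = 1\}$ identifies $\cg(\cd,2)$ with the up-sets of $\cd^{\mathrm{tr}}$, and complementation identifies these with $J(\cd^{\mathrm{tr}})$; composing gives that $\phi$ is a bijection.

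Next I would rewrite $\rho_g$ as an explicit square-free monomial. Fix $g \in J(\cd^{\mathrm{tr}})$, write $I = \{i : g_i = 1\}$ for the corresponding order ideal, and recall that the CBN parents of $i$ are the elements covered by $i$ in $\cd^{\mathrm{tr}}$. Reading off \eqref{eqn:cbnCondProbs}: if $g_i = 1$ then every element covered by $i$ lies in the down-set $I$, so the $i$-th factor is $\theta_1^{(i)}$; if $g_i = 0$ the $i$-th factor is $1$ unless every element covered by $i$ lies in $I$, in which case it is $\theta_0^{(i)}$. Since $\cd^{\mathrm{tr}}$ is finite, the condition ``every element covered by $i$ lies in $I$'' is, for $i \notin I$, equivalent to $i$ being a minimal element of $\cd^{\mathrm{tr}}\setminus I$ (propagate non-membership in $I$ down a saturated chain). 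Writing $\min(S)$ for the set of minimal elements of a subset $S$, this gives
\[
\rho_g = \prod_{i \in I}\theta_1^{(i)}\,\prod_{i \in \min(\cd^{\mathrm{tr}}\setminus I)}\theta_0^{(i)}.
\]

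Then I would do the same for $\psi_h$. Fix $h \in \cg(\cd,2)$ and put $U = \{i : h_i = 1\}$. Since $h$ is order-preserving and each $\cd$-parent $j$ of $i$ satisfies $j < i$ in $\cd^{\mathrm{tr}}$, we have $M_i := \bigvee_{j \in \pa(i)} h_j \le h_i$, so only the last two cases of \eqref{eqn:dmlbnCondProbs} occur; with $k=2$ the $i$-th factor is $\theta_0^{(i)}$ when $h_i = 0$, is $\theta_0^{(i)}+\theta_1^{(i)}=1$ when $h_i=1$ and some $\cd$-parent lies in $U$, and is $\theta_1^{(i)}$ when $h_i = 1$ and no $\cd$-parent lies in $U$. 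The last condition is equivalent to $i$ being minimal in $U$ with respect to $\cd^{\mathrm{tr}}$: if $i$ is minimal in $U$ then everything strictly below it, in particular every $\cd$-parent, lies outside $U$; conversely if some $j < i$ in $\cd^{\mathrm{tr}}$ lies in $U$, take a directed path $j = \ell_0 \to \cdots \to \ell_m = i$ in $\cd$ and use order-preservation to get $1 = h_{\ell_0} \le \cdots \le h_{\ell_{m-1}}$, so the $\cd$-parent $\ell_{m-1}$ lies in $U$. Hence
\[
\psi_h = \prod_{i \in \min(U)}\theta_1^{(i)}\,\prod_{i \notin U}\theta_0^{(i)}.
\]

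Finally, substituting $h = \phi(g)$ gives $U = \cd^{\mathrm{tr}}\setminus I$, hence $\min(U) = \min(\cd^{\mathrm{tr}}\setminus I)$; interchanging $\theta_0^{(i)} \leftrightarrow \theta_1^{(i)}$ in the formula for $\psi_{\phi(g)}$ then reproduces the formula for $\rho_g$ verbatim, which is exactly the stated coordinate-wise identity. Since $\phi$ is a bijection and the swap of $\theta_0$ and $\theta_1$ in each coordinate is an automorphism of the parameter space $\prod_i \Delta_1$, it follows that $\image(\rho)$ and $\image(\psi)$ coincide after relabeling the coordinate $p_g$ by $p_{\phi(g)}$. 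I expect the only genuine obstacle to be the two ``minimality'' identifications used to pass from the conditional-probability tables to the monomial forms: because the edges of $\cd$ need not be cover relations of $\cd^{\mathrm{tr}}$, one cannot compare a $\cd$-parent with a cover relation directly, and the argument must be routed through a saturated chain (resp.\ a directed path), using that $I$ is a down-set (resp.\ that $h$ is order-preserving). Everything else is routine bookkeeping with the two parameterizations.
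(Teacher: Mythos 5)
Your proof is correct and follows essentially the same route as the paper: the same complementation bijection $\phi(g)_i = 1-g_i$ between $J(\cd^{\mathrm{tr}})$ and $\cg(\cd,2)$, followed by a factor-by-factor comparison of the two directed-graphical-model parameterizations under the swap $\theta_0^{(i)} \leftrightarrow \theta_1^{(i)}$. The difference is only presentational: you package the case analysis into closed monomial formulas indexed by minimal elements, and in doing so you handle explicitly the distinction between $\cd$-parents and cover relations of $\cd^{\mathrm{tr}}$, a point the paper's case analysis leaves implicit.
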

\begin{proof}
We first describe the bijection between the state spaces of the two models.  Recall that the state space of the CBN model is the distributive lattice $J(\cd^\mathrm{tr})$ of order ideals.
There is a natural bijection between elements $g \in J(\cd^\mathrm{tr})$ and order-preserving maps from $\pi : \cd^\mathrm{tr} \to \{0, 1\}$. The general form of the following bijection can be found in \cite[Proposition 3.5.1]{RS12} but we describe the special case for order-preserving maps to $\{0, 1\}$ here. For any order ideal $g$ of $\cd^\mathrm{tr}$ let $\pi_g$ be the map defined by
\[
\pi_g(i) = 
\begin{cases}
0, &  i \in g \\
1, &  i \notin g, 
\end{cases}
\]
It remains to show that $\rho_g(\theta_0^{(1)}, \theta_1^{(1)}, \ldots, \theta_0^{(n)}, \theta_1^{(n)}) = \psi_{\phi(g)}(\theta_1^{(1)}, \theta_0^{(1)}, \ldots, \theta_1^{(n)}, \theta_0^{(n)})$. We do this by showing that the two models have the same conditional probabilities after interchanging $\theta_0^{(i)}$ and $\theta_1^{(i)}$. 

Let $\phi(g) \in \cg(\cd, 2)$ be a state of the D-MLBN model and for each $i$ let $M_i = \bigvee_{j \in \pa(i)} \phi(g)_j$. Observe that
\[
M_i = \bigvee_{j \in \pa(i)} \phi(g)_j =
\begin{cases}
0, ~~\phi(g)_j = 0 \mbox{ for all } j \in \pa(i)   \\
1, \mbox{ otherwise }
\end{cases}
= 
\begin{cases}
0, ~~g_j = 1 \mbox{ for all } j \in \pa(i)  \\
1, \mbox{ otherwise }
\end{cases}
\]
with the second equality following from the definition of $\phi$. We now examine the different possibilities for $\phi(g)_i$ and $M_i$ and compute the conditional probabilities in each case. 

Suppose $\phi(g)_i = 0$ and $M_i = 0$, then under the D-MLBN model, we have
\[
P(X_i = \phi(g)_i | X_{\pa(i)} = \phi(g)_{\pa(i)}) = \sum_{\ell \leq 0} \theta_\ell^{(i)} = \theta_0^{(i)}.
\]
We know from the above formula for $M_i$ that if $M_i = 0$, then for all $j \in \pa(i)$, $g_j = 1$. Since $\phi(g)_i = 0$ we have that $g_i = 1$ and so the corresponding entry of the matrix of conditional probabilities for the CBN model in Equation \ref{eqn:cbnCondProbs} is $\theta_1^{(i)}$. 

If $\phi(g)_i = 1$ and $M_i = 0$, then under the D-MLBN model, we have
\[
P(X_i = \phi(g)_i | X_{\pa(i)} = \phi(g)_{\pa(i)}) =  \theta_1^{(i)}. 
\]
On the other hand, we now have $g_i = 0$ and $M_i = 0$ so the conditional probability for the CBN model is $\theta_0^{(i)}$. 

It is straightforward to check the remaining cases so we omit it here. In these cases both models have the same conditional probabilities which are either 0 or 1. Since each model is a directed graphical model, the probability of observing $g$ (or $\phi(g)$) is simply
\begin{align*}
    \rho_g(\theta) &= \prod_{i \in V} P_\mathrm{CBN}(X_i = g_i | X_{\pa(i)} = g_{\pa(i)}) \\
    \psi_g(\theta) &= \prod_{i \in V} P_{\mathrm{D-MLBN}}(X_i = g_i | X_{\pa(i)} = g_{\pa(i)}). \\
\end{align*}
Since we know that the conditional probabilities are equal after interchanging the parameters $\theta_0^{(i)}$ and $\theta_1^{(i)}$ for each $i$, we have that the above products are equal after interchanging the corresponding parameters as claimed above. 
\end{proof}

We end this section with an example that illustrates the previous theorem. 

\begin{ex}
\label{ex:CBNvsDMLBN}
Let $\cd$ be the DAG pictured on the left of Figure \ref{fig:dmlbnExample}. We have already seen that $\cg = \cg(\cd, 2)$ pictured in the middle of Figure \ref{fig:dmlbnExample} is the state space of the D-MLBN model on $\cd$ while the lattice of order ideals $J(\cd^\mathrm{tr})$ that is pictured on the right of Figure \ref{fig:dmlbnExample} is the state space of the CBN model on $\cd^\mathrm{tr}$. 

The map $\phi$ from Theorem \ref{thm:CBNisoDMLBN} maps the element $g = (1, 0, 1, 0, 0) \in J(\cd^\mathrm{tr})$ to the element $\phi(g) = (0, 1, 0, 1, 1) \in \cg$. The probability of $g$ under the CBN model is
\[
\rho_{10100}(\theta_0^{(1)}, \theta_1^{(1)}, \ldots, \theta_0^{(5)}, \theta_1^{(5)}) = \theta_1^{(1)} \theta_0^{(2)} \theta_1^{(3)}  
\]
while the probability of $\phi(g)$ under the D-MLBN model is
\[
\psi_{01011}(\theta_0^{(1)}, \theta_1^{(1)}, \ldots, \theta_0^{(5)}, \theta_1^{(5)}) = \theta_0^{(1)} \theta_1^{(2)} \theta_0^{(3)}.
\]
We can see that these two probabilities will be equal after interchanging the parameters $\theta_0^{(i)}$ and $\theta_1^{(i)}$.

\end{ex}

\section{Algebraic Structure of the D-MLBN Model}
In this section we describe the algebraic structure of the D-MLBN model. We do this by extending the techniques developed for CBN models in \cite{BES07} to all D-MLBN models. The main tool here is M\"obius inversion which corresponds to a linear change of coordinates on the D-MLBN model. In these new coordinates, the ideal of polynomials that vanish on a D-MLBN belongs to a special class of \emph{toric ideals} whose Gr\"obner bases were described by Hibi in \cite{TH87}. 

Let $\cd$ be a DAG and $\psi_\cd^{(k)}$ be the map parameterizing the $k$-state D-MLBN model on $\cd$ and $\cg$ be the state space of the model. Also let $\rr[p_g] = \rr[p_g : g \in \cg]$, then our goal is to find a Gr\"obner basis for the ideal
\[
I_\cd^{(k)} = \{ f \in \rr[p_g] : f(a) = 0 \mbox{ for all } a \in \image(\psi_\cd^{(k)})\}. 
\]
Finding a Gr\"obner basis for the ideal $I_\cd$ is a first step in obtaining an \emph{implicit} description of the D-MLBN model on $\cd$. 

\begin{ex}
Again let $\cd$ be the DAG pictured on the left in Figure \ref{fig:dmlbnExample}. Then the ideal $I_\cd^{(2)}$ is generated by the polynomials
\begin{align*}
&{p}_{00000}+{p}_{00001}+{p}_{00011}+{p}_{00101}+{p}_{00111}+{p}_{01011}+{p}_{01111}+{p}_{10111}+{p}_{11111}-1, \\
&{p}_{01111}{p}_{10111}-{p}_{00101}{p}_{11111}-{p}_{00111}{p}_{11111}, \\
&{p}_{01011}{p}_{10111}-{p}_{00000}{p}_{11111}-{p}_{00001}{p}_{11111}-{p}_{00011}{p}_{11111}, \\
&{p}_{00111}{p}_{01011}-{p}_{00011}{p}_{01111}, \\
&{p}_{00101}{p}_{01011}-{p}_{00000}{p}_{01111}-{p}_{00001}{p}_{01111}, \\
&{p}_{00011}{p}_{0010 1}-{p}_{00000}{p}_{00111}-{p}_{00001}{p}_{00111}.
\end{align*}
\end{ex}

As we noted before, the state space of the D-MLBN model is a distributive lattice. Hibi showed in \cite{TH87} that there is a toric ideal naturally associated to such a lattice. In \cite{BES07} the authors show that the ideal of the CBN model is  the toric ideal defined by Hibi, after a suitable change of coordinates. So it is natural to attempt to extend this result from the binary case to D-MLBN models with an arbitrary number of states. We describe the construction of Hibi here but for additional information we refer the reader to \cite{HHO18} or \cite{TH87}. Let $L$ be a distributive lattice and recall that there is unique poset $P$ (up to isomorphism) such that $L = J(P)$ (see \cite[Thm 3.4.1]{RS12}).  Let $P$ have ground set $[n]$.
Then the map
\begin{align*}
    \varphi_L :  \rr[q_g : g \in L]  &\to  \rr[t, x_1, \ldots x_n] \\
                    q_g           &\mapsto  t \prod_{i \in g}x_i
\end{align*}
has kernel $I_L = \ker(\phi_L)$ generated by
$I_L = \langle q_g q_h - q_{g \wedge h} q_{g \vee h} : g, h \in \cg \mbox{ are incomparable} \rangle$ \cite{TH87}.  Recall that elements $g$ and $h$ in a poset are \emph{incomparable} if neither $g \leq h$ nor $h \leq g$. The following example illustrates this construction.

\begin{ex}
Let $P$ be the poset pictured on the left in Figure \ref{fig:CBNexample} whose lattice of order ideals $L = J(P)$ is pictured on the right. Then $\varphi_L$ is given by
\begin{align*}
q_{00000} &= t                   &
q_{10000} &= t x_1              \\
q_{01000} &= t x_2               &
q_{11000} &= t x_1 x_2          \\
q_{11100} &= t x_1 x_2 x_3      &
q_{11110} &= t x_1 x_2 x_3 x_4   \\
q_{11101} &= t x_1 x_2 x_3 x_5  &
q_{11111} &= t x_1 x_2 x_3 x_4 x_5. 
\end{align*}
There are two pairs of incomparable elements in the lattice $L$ which are $(1,0,0,0,0)$ and $(0,1,0,0,0)$ as well as $(1,1,1,0)$ and $(1,1,1,0,1)$. This means the generators of the ideal $I_L$ are the binomials
\[
q_{10000}q_{01000}-q_{00000}q_{11000},~~ q_{11110}q_{11101}-q_{11100}q_{11111}
\]
which correspond to these two pairs of incomparable elements. 
\end{ex}

We are now ready to state our main result. 

\begin{thm}
\label{thm:toricDMLBN}
Let $\cd = (V, E)$ be a DAG and $I_\cd^{(k)}$ be vanishing ideal of the $k$-state D-MLBN model on $\cd$. Then after homogeneizing the map $\psi_\cd^{(k)}$ and applying the linear change of coordinates 
\[
q_g = \sum_{h \leq g} p_h
\]
on $\rr[p_g]$ the ideal $I_\cd^{(k)}$ is equal to the toric ideal associated to the distributive lattice $J(D^\mathrm{tr} \times \mathbf{k-1})$ with generating set
\[
I_\cd^{(k)} = \langle q_g q_h - q_{g \wedge h} q_{g \vee h} : g, h \in \cg \mbox{ are incomparable} \rangle.
\]
\end{thm}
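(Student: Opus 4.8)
The plan is to match the D-MLBN model, in the coordinates $q_g=\sum_{h\le g}p_h$, with Hibi's toric parameterization of the distributive lattice $\cg=\cg(\cd,k)$ and then quote \cite{TH87}. I would carry this out in three steps: (i) identify $\cg$ as the lattice of order ideals of $(\cd^{\mathrm{tr}})^{\mathrm{op}}\times\mathbf{k-1}$; (ii) show that after homogenization the substitution $q_g=\sum_{h\le g}p_h$ turns $\psi_\cd^{(k)}$ into the Hibi monomial map of $\cg$, so that the vanishing ideals agree; and (iii) invoke Hibi's theorem that this ideal is generated by the quadratic binomials $q_gq_h-q_{g\wedge h}q_{g\vee h}$ over incomparable pairs. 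That the statement names $J(\cd^{\mathrm{tr}}\times\mathbf{k-1})$ rather than $J((\cd^{\mathrm{tr}})^{\mathrm{op}}\times\mathbf{k-1})$ is harmless: the two lattices are dual, and Hibi's defining binomials are symmetric under interchanging $\wedge$ and $\vee$, so the two lattices carry the same toric ideal. Since step (i) is a lattice isomorphism, the coordinatewise meet and join on $\cg$ correspond to intersection and union of order ideals, so the binomials really are indexed by incomparable pairs of $\cg$.

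For step (i) I would compute the join-irreducible elements of $\cg$ directly, or cite the standard description of the lattice $\mathrm{Hom}(P,\mathbf{k})$ of order-preserving maps (cf.\ \cite[Ch.~3]{RS12}). For $i\in V$ and $\ell\in\{1,\dots,k-1\}$ let $g^{(i,\ell)}\in\cg$ be the order-preserving map sending $j$ to $\ell$ when $i\le j$ in $\cd^{\mathrm{tr}}$ and to $0$ otherwise. A short check shows the $g^{(i,\ell)}$ are exactly the join-irreducibles of $\cg$, that every $g\in\cg$ is the join of those $g^{(i,\ell)}$ with $\ell\le g_i$, and that $g^{(i,\ell)}\le g^{(i',\ell')}$ iff $i'\le i$ in $\cd^{\mathrm{tr}}$ and $\ell\le\ell'$. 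By Birkhoff's theorem \cite[Thm.~3.4.1]{RS12} this gives $\cg\cong J\bigl((\cd^{\mathrm{tr}})^{\mathrm{op}}\times\mathbf{k-1}\bigr)$, with $g$ corresponding to the order ideal $I(g)=\{(i,\ell):1\le\ell\le g_i\}$.

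For step (ii) the key observation is that the coordinate change is a cumulative sum: $q_g=\sum_{h\le g}p_h=P(X\le g)$. Using the solved structural equations $X_i=\bigvee_{j\in\an(i)\cup\{i\}}Z_j$ and the fact that $g$ is order-preserving (so $\min\{g_i:j\le i\text{ in }\cd^{\mathrm{tr}}\}=g_j$), the event $\{X\le g\}$ equals $\{Z_j\le g_j\text{ for all }j\in V\}$, and independence of the $Z_j$ yields
\[
q_g \;=\; \prod_{j\in V}P(Z_j\le g_j) \;=\; \prod_{j\in V}s_{g_j}^{(j)},\qquad s_\ell^{(j)}:=\sum_{m=0}^{\ell}\theta_m^{(j)}.
\]
Homogenizing amounts to dropping the normalizations $\sum_\ell\theta_\ell^{(i)}=1$, so that the image becomes a cone (scaling $\theta^{(i)}$ by $\lambda_i$ scales every $q_g$ by $\prod_i\lambda_i$); equivalently we may use the algebraically independent coordinates $s_0^{(j)},\dots,s_{k-1}^{(j)}$ in place of the $\theta_\ell^{(j)}$. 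Now each $q_g=\prod_j s_{g_j}^{(j)}$ is a squarefree monomial, and setting $t:=\prod_{j\in V}s_0^{(j)}$ and $x_{(i,\ell)}:=s_\ell^{(i)}/s_{\ell-1}^{(i)}$ for $\ell=1,\dots,k-1$ — a monomial reparameterization that is dominant — telescopes $q_g$ into $t\prod_{(i,\ell)\in I(g)}x_{(i,\ell)}$, which is precisely Hibi's map $\varphi_\cg(q_g)$. Hence the homogeneous vanishing ideals coincide and \cite{TH87} completes the proof. One can avoid the explicit reparameterization by instead verifying that the integer exponent matrices of $g\mapsto\prod_j s_{g_j}^{(j)}$ and of $\varphi_\cg$ have the same kernel sublattice of $\zz^{\cg}$ — an elementary telescoping identity among partial sums — which already forces the two toric ideals to be equal.

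The step I expect to be the main obstacle is organizational rather than conceptual: pinning down the lattice identification in (i) with its correct orientation (the opposite on $\cd^{\mathrm{tr}}$, invisible at the level of the ideal only because Hibi's relations are symmetric in $\wedge$ and $\vee$) and carrying out the monomial reparameterization in (ii) — or the equivalent kernel-lattice computation — so that it lands exactly on Hibi's configuration, while being careful that ``homogenizing'' really does yield the affine cone over the Hibi toric variety, on which the degree-two binomials of \cite{TH87} generate the whole vanishing ideal.
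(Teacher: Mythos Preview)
Your argument is correct and follows the same route as the paper: compute $q_g=\sum_{h\le g}p_h=\prod_{i\in V} s_{g_i}^{(i)}$ (the paper's $\alpha_{g_i}^{(i)}$), then apply a telescoping monomial reparameterization of these partial sums to land exactly on Hibi's map for the lattice $\cg$ and quote \cite{TH87}. The only cosmetic difference is orientation: the paper uses the bijection $g\mapsto\tilde g=\{(i,r):0\le r\le k-g_i-1\}$ to realize $\cg$ directly as $J(\cd^{\mathrm{tr}}\times\mathbf{k-1})$ (with the matching substitution $\alpha_j^{(i)}=\prod_{r=0}^{k-j-2}x_r^{(i)}$) rather than as $J((\cd^{\mathrm{tr}})^{\mathrm{op}}\times\mathbf{k-1})$, so no appeal to the $\wedge/\vee$ symmetry of the Hibi relations is needed.
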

\begin{proof}
First we note that for any $g \in \cg = \cg(\cd, k)$ we have that
\begin{equation}
\label{eqn:orderIdealProb}
    \sum_{h \leq g} p_h = \prod_{i \in V} \sum_{\ell \leq g_i} \theta_\ell^{(i)}. 
\end{equation}
This leads us to consider a transform of the parameter space given by
\begin{equation}
\label{eqn:paramTransform}
    \alpha_j^{(i)} = \sum_{\ell \leq j} \theta_\ell^{(i)}.  
\end{equation}
Note that the matrix of this transformation is block diagonal and each block can be made into a lower triangular matrix with ones on the diagonal so this is truly a linear change of coordinates. Combining Equations \ref{eqn:orderIdealProb} and \ref{eqn:paramTransform} we see that in the transformed coordinates, the map $\psi_\cd^{(k)}$ is given by 
\[
q_g = t\prod_{i \in V} \alpha_{g_i}^{(i)}. 
\]
We also have introduced a new variable $t$ which homogenizes the parameterization so that $I_\cd^{(k)}$ will be a homogeneous ideal. This simply removes the trivial relation that all of the coordinates sum to one. 

We now consider the parameterization of the Hibi ideal associated to $I_{L}$ where $L = J(D^\mathrm{tr} \times \{0, 1, \ldots k-2\})$. By \cite[Proposition 3.5.1]{RS12}, there is a bijection between order-preserving maps $g \in \cg$ and order ideals of the poset $D^\mathrm{tr} \times \{0, 1, \ldots, k-2\}$. Under this bijection, a map $g$ is sent to an order ideal $\tilde{g} = \{(i, r) \in D^\mathrm{tr} \times \{0, 1, \ldots, k-2\} :  0 \leq r \leq k - g_i - 1\}$. Then $I_L$ is the kernel of the map
\begin{align*}
    \varphi_L :  \rr[q_g : g \in L]  &\to  \rr[t, x_r^{(i)} : i \in [n], r \in {0, \ldots, k-2}] \\
                    q_g           &\mapsto  t \prod_{(i, r) \in \tilde{g}}x_r^{(i)}.
\end{align*} 
At first, this parameterization might look quite different when compared to the parameterization $\psi_\cd^{(k)}$ but we can transform the parameter space of $\psi_\cd^{(k)}$ again so that they agree. Consider the transform given by 
\[
\alpha_j^{(i)} = \prod_{r = 0}^{k - j - 2} x_{r}^{(i)}
\]
and note that this is invertible with inverse given by
\[
x_{i-j}^{(i)} = \frac{\alpha_{j-1}^{(i)}}{\alpha_j^{(i)}}. 
\]
After applying this transform on the parameter space of $\psi_\cd^{(k)}$ the map is is given by
\[
q_g = t \prod_{i \in v} \prod_{r = 0}^{k - g_i - 2} x_r^{(i)} = t\prod_{(i, r) \in \tilde{g}} x_r^{(i)}
\]
where the last equality follows directly from the definition of $\tilde{g}$. Since the ideals $I_\cd^{(k)}$ and $I_L$ are now the kernel of the exact same map, they are  equal and the generating set stated above is exactly the generating set described by Hibi in \cite{TH87}. 
\end{proof}

\begin{rmk}
Note that the coordinate transform that takes the $p_g$ coordinates to the $q_g$ coordinates corresponds to M\"obius inversion on the lattice $\cg$. This is the same transform that is used in \cite{BES07} for CBN models but in this case the resulting toric ideal is that associated to $J(D^\mathrm{tr})$ since it is a 2-state model. 
\end{rmk}

We conclude with two examples which illustrate the previous theorem. 

\begin{figure}
    \centering
    \begin{subfigure}[b]{0.35 \linewidth}
        \begin{tikzpicture}[->, >=stealth', shorten >=1pt, auto, node distance=2cm, semithick]
        \tikzstyle{every node}=[circle, line width =1pt, font=\scriptsize, minimum height =0.65cm]
            \node (i1) [draw] {1};
            \node (i2) [above left of = i1, draw] {2};
            \node (i3) [above right of  = i1, draw] {3};
            
            \path (i1) edge (i2);
            \path (i1) edge (i3);
            
            \node (label) [below of = i1, yshift = 1cm] {\scalebox{1.5}{$\cd$}};
        \end{tikzpicture}
    \end{subfigure}
    \begin{subfigure}[b]{0.35 \linewidth}
        \begin{tikzpicture}
            \node (i10) {$(1,0)$};
            \node (i11) [above of = i10]{$(1,1)$};
            \node (i20) [above of = i10, xshift =-1.5cm] {$(2,0)$};
            \node (i30) [above of = i10, xshift = 1.5cm] {$(3,0)$};
            \node (i21) [above of = i20] {$(2,1)$};
            \node (i31) [above of = i30] {$(3,1)$};
        
            \path (i10) edge (i11);
            \path (i10) edge (i20);
            \path (i10) edge (i30);
            \path (i11) edge (i21);
            \path (i11) edge (i31);
            \path (i20) edge (i21);
            \path (i30) edge (i31);

            \node (label) [below of = i0, yshift = .3cm] {\scalebox{1}{$\cd^\mathrm{tr} \times \{0,1\}$}};
        \end{tikzpicture}
    \end{subfigure}
    \caption{A DAG $\cd$ and the poset $\cd^\mathrm{tr} \times \{0,1\}$ whose order ideals correspond to the order-preserving maps from $\cd$ to $\{0, 1, 2\}$} 
    \label{fig:threeVertGraph}
\end{figure}

\begin{ex}
Let $\cd$ be the graph pictured in Figure \ref{fig:threeVertGraph} on the left. Consider the state $g = (0, 1, 2) \in \cg$ of the 3-state D-MLBN model on $\cd$. The original probability of this state under the model is $p_{012} = \theta_0^{(1)}\theta_1^{(2)}\theta_2^{(3)}$. Then after our first coordinate transform
\[
q_{012} = \sum_{h \leq g} p_h = p_{000}+p_{001}+p_{002}+p_{010}+p_{011}+p_{012} = \theta_0^{(1)}(\theta_0^{(2)}+\theta_1^{(2)})(\theta_0^{(3)}+\theta_1^{(3)} + \theta_2^{(3)}) = \alpha_0^{(1)}\alpha_1^{(2)}.
\]
Note that we omit $\alpha_2^{(3)}$ since the parameter corresponding to the state $k-1$ is always 1. Under our second transform of the parameter space the relevant parameters become $\alpha_0^{(1)} = x_0^{(1)}x_1^{(1)}$ and $\alpha_1^{(2)} = x_0^{(2)}$. After homogenizing with a new parameter $t$ and applying this second transform we have that
\[
\psi_\cd^{(3)}(q_{012}) = t x_0^{(1)} x_1^{(1)} x_0^{(2)}.
\]

The state $g$ also corresponds to the order ideal $\tilde{g} = \{(1,0), (1,1), (2,0)\} \in L = J(\cd^\mathrm{tr} \times \{0,1\})$. This means the map $\varphi_L$ takes $q_g$ to
\[
\varphi_L(q_{012}) = t x_0^{(1)}x_1^{(1)}x_0^{(2)}
\]
and we can see that $\varphi_L(q_g) = \psi_\cd^{(3)}(q_g)$ as was shown in the proof of Theorem \ref{thm:toricDMLBN}. 
\end{ex}

\begin{ex}
Let $\cd$ be the graph pictured in Figure \ref{fig:dmlbnExample}. Then after homogenizing the parameterization $\psi_\cd^{(2)}$ and applying the coordinate transform described in Theorem \ref{thm:toricDMLBN} the ideal $I_\cd^{(2)}$  is generated by the polynomials
\begin{align*}
q_{0 1 1 1 1}q_{1 0 1 1 1}-q_{0 0 1 1 1}q_{1 1 1 1 1}, \\
q_{0 1 0 1 1}q_{1 0 1 1 1}-q_{0 0 0 1 1}q_{1 1 1 1 1}, \\
q_{0 0 1 1 1}q_{0 1 0 1 1}-q_{0 0 0 1 1}q_{0 1 1 1 1}, \\
q_{0 0 1 0 1}q_{0 1 0 1 1}-q_{0 0 0 0 1}q_{0 1 1 1 1}, \\
q_{0 0 0 1 1}q_{0 0 1 0 1}-q_{0 0 0 0 1}q_{0 0 1 1 1}.
\end{align*}
Note that the first monomial in each polynomial corresponds to a pair of incomparable elements $g, h \in \cg$ while the second corresponds to their meet and join which are given by taking coordinate-wise minimums and maximums respectively. 
\end{ex}

\section*{Acknowledgments}
Benjamin Hollering and Seth Sullivant were partially supported by the US National Science Foundation
(DMS 1615660).

\bibliography{ref.bib}{}
\bibliographystyle{plain}
\end{document}